\newtheorem{thm}{}[section]
\newtheorem{theorem}[thm]{Theorem}
\newtheorem{corollary}[thm]{Corollary}
\newtheorem{lemma}[thm]{Lemma}
\newtheorem{proposition}[thm]{Proposition}
\theoremstyle{remark}
\newtheorem{question}[thm]{Question}
\newtheorem{example}[thm]{Example}
\numberwithin{equation}{section}
\newcommand{\normiii}[1]{{\left\vert\mkern-1.8mu\left\vert\mkern-1.8mu\left\vert #1 \right\vert\mkern-1.8mu\right\vert\mkern-1.8mu\right\vert}}
\DeclareMathOperator{\core}{core}
\DeclareMathOperator{\spn}{span}
\DeclareMathOperator{\conv}{co}
\DeclareMathOperator{\Ker}{Ker}
\newcommand{\DD}{\ensuremath{\mathbb{D}}}
\newcommand{\RR}{\ensuremath{\mathbb{R}}}
\newcommand{\F}{\ensuremath{\mathcal{F}}}
\newcommand{\II}{\ensuremath{\mathcal{I}}}
\newcommand{\Fou}{\ensuremath{\mathcal{F}}}
\newcommand{\LL}{\ensuremath{\mathcal{L}}}
\newcommand{\NN}{\ensuremath{\mathbb{N}}}
\newcommand{\FF}{\ensuremath{\mathbb{F}}}
\newcommand{\xx}{\ensuremath{\bm{x}}}
\newcommand{\Id}{\ensuremath{\mathrm{Id}}}
\newcommand{\ee}{\ensuremath{\bm e}}
\begin{document}

\title[On a `philosophical' question about Banach envelopes]{On a `philosophical' question about Banach envelopes}
\author[F. Albiac]{Fernando Albiac}
\address{Mathematics Department-InaMat\\
Universidad P\'ublica de Navarra\\
Campus de Arrosad\'{i}a\\
Pamplona\\
31006 Spain}
\email{fernando.albiac@unavarra.es}

\author[J.~L. Ansorena]{Jos\'e L. Ansorena}
\address{Department of Mathematics and Computer Sciences\\
Universidad de La Rioja\\
Logro\~no\\
26004 Spain}
\email{joseluis.ansorena@unirioja.es}

\author[P. Wojtaszczyk]{Przemys\l{}aw Wojtaszczyk}
\address{Institute of Mathematics of the Polish Academy of Sciences\\
00-656 Warszawa\\
ul. \'Sniadeckich 8\\
Poland}
\email{wojtaszczyk@impan.pl}

\subjclass[2010]{46A16, 46A20}
\begin{abstract}
We show how to construct nonlocally convex quasi-Banach spaces $X$ whose dual separates the points of a dense subspace of $X$ but does not separate the points of $X$. Our examples connect with a question raised by Pietsch [\emph{About the Banach envelope of $l_{1,\infty}$}, Rev. Mat. Complut. 22 (2009), 209--226] and shed light into the unexplored class of quasi-Banach spaces with nontrivial dual which do not have sufficiently many functionals to separate the points of the space.
\end{abstract}

\thanks{F. Albiac acknowledges the support of the Spanish Ministry for Economy and Competitivity under Grant MTM2016-76808-P for \emph{Operators, lattices, and structure of Banach spaces}. F. Albiac and J.~L. Ansorena acknowledge the support of the Spanish Ministry for Science, Innovation, and Universities under Grant PGC2018-095366-B-I00 for \emph{An\'alisis Vectorial, Multilineal y Aproximaci\'on}. P. Wojtaszczyk was supported by National Science Centre, Poland grant UMO-2016/21/B/ST1/00241.}

\maketitle

\section{Introduction and background}\label{SectionQB}
\noindent The \emph{Banach envelope} of a quasi-Banach space 
 $X$ over the real or complex field $\FF$, consists of a Banach space $\widehat{X}$ together with a norm-one linear map $J_X\colon X \to \widehat{X}$ satisfying the following property: for every Banach space $Y$ and every continuous linear map $T\colon X \to Y$ there is a unique continuous linear map $\widehat{T}\colon \widehat{X}\to Y$ such that $\widehat{T}\circ J_X=T$, and the ``extension'' $\widehat{T}$ has a norm bounded by that of $T$. Pictorially we have
\[\xymatrix{
 \widehat{X}\ar[drr]^{\widehat{T}} & \\
X \ar[u]^{J_X} \ar[rr]_T && Y
}\]
In particular, $X$ and $\widehat{X}$ have the same dual space. We will refer to $J_{X}$ as the \emph{envelope map} of $X$.

Since the Banach envelope of $X$ is defined by means of a universal property, it is unique in the sense that if a Banach space $Z$ and a bounded linear map $J\colon X\to Z$ satisfy the property, then there is an linear isometry $S\colon \widehat{X} \to Z$ with $S\circ J_X=J$. In general, if a Banach space $Z$ and a bounded linear map $J\colon X\to Z$ are such that $\widehat{J}\colon \widehat{X}\to Z$ is a $\lambda$-isomorphism we say that $Z$ is an \emph{$\lambda$-isomorphic representation} of the Banach envelope of $X$ via $J$. When the constant $\lambda\ge 1$ is irrelevant, we simply drop it. A $1$-isomorphic representation will be called \emph{isometric representation}.

A construction of the Banach envelope of a quasi-Banach space $X$ goes as follows. Let $\conv(B_{X})$ be the convex hull of the closed unit ball of $X$, and denote by $\Vert \cdot \Vert_{e}$ the Minkowski functional of $\conv(B_{X})$, i.e.,
\begin{equation}\label{BEnorm}
\Vert x \Vert_{e}= \inf\left\{t>0\colon x/t \in \conv(B_{X}) \right\},\quad x\in X.
\end{equation}
Equation \eqref{BEnorm} yields a semi-norm on $X$.
Moreover, it is easy to prove that,
 if $\Vert \cdot\Vert$ denotes de quasi-norm on $X$,
 $\Vert \cdot \Vert_{e}$ is also given by the formula
\[
\Vert x \Vert_{e}=\inf\left\{\sum_{i=1}^{n}\Vert x_{i}\Vert\colon \sum_{i=1}^{n} x_{i}=x,\, x_{i}\in X,\, n\in \NN\right\}, \quad x\in X,
\]
which makes $\Vert\cdot\Vert_{e}$ the largest seminorm $\normiii{\cdot}$ on $X$ for which $\normiii{x}\le \Vert x\Vert$ for all $x\in X$. Passing to the quotient space $X/N$, with
\begin{equation}\label{eq:DefN}
N=\{x\in X\colon \Vert x\Vert_{e}=0\},
\end{equation}
we obtain a normed linear space whose completion, $\widehat X$, is the required Banach envelope, with associated envelope map $J_X:=I \circ Q$, where $Q\colon X\to X/N$ is the canonical quotient map and
$
I\colon X/N\to \widehat{X}
$
is the inclusion map. Note that, by construction, $\Ker(J_X)=N$ and $J_X(X)$ is dense in $\widehat X$. Note also that by the Hahn-Banach theorem,
\begin{align*}
N&=(X^*)^\perp :=\{x \in X \colon x^*(x)=0  \text{ for all } x^*\in X^*\}\\
&=\{ x \in X \colon T(x)=0 \text{ for all Banach spaces $Z$ and all }  T\in\LL(X,Z)\},
\end{align*}
where $\LL(X,Z)$ denotes the quasi-Banach space of all bounded linear operators from $X$ to $Z$. This identity yields that the existence of nonzero elements in $N$ can be equivalently re-stated in terms of the injectivity of the map $J_{X}$ as follows.
\begin{lemma}\label{lem:zero} Given a quasi-Banach space $X$ and a subspace $X_0$ of $X$, the following are equivalent.
\begin{enumerate}[(i)]
\item The semi-norm $\Vert \cdot\Vert_{e}$ given by \eqref{BEnorm} is a norm on $X_0$.
\item The envelope map $J_X\colon X\to \widehat X$ restricted to $X_0$ is one-to-one.
\item There is a Banach space $Z$ and a bounded linear operator from $X$ into $Z$ whose restriction to $X_0$ is one-to-one.
\item $X_0\cap (X^*)^\perp=\{0\}$, i.e., the dual space $X^*$ separates the points of $X_0$.
\end{enumerate}
\end{lemma}

In this paper we will refer to $X$ as having the \emph{point-separation property} if $ (X^*)^\perp=\{0\}$, i.e., the dual space $X^*$ separates the points of $X$ (\cite{KPR1984}). Applying Lemma~\ref{lem:zero} with $X=X_0$ we obtain the following.
\begin{lemma}\label{lem:2}
Let $X$ be a quasi-Banach space.
The following are equivalent.
\begin{enumerate}[(i)]
\item The envelope map $J_{X}$ is one-to-one.
\item There is a one-to-one bounded linear map from $X$ into a Banach space.
\item $X$ has the point-separation property.
\end{enumerate}
\end{lemma}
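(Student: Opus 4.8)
The plan is to deduce this directly from Lemma~\ref{lem:zero} by specializing to the case $X_0=X$, so that the three assertions of the present statement become three of the four mutually equivalent conditions already established there. First I would match the easy correspondences. With $X_0=X$, condition (ii) of Lemma~\ref{lem:zero} --- that $J_X$ restricted to $X_0$ is one-to-one --- imposes no restriction on the domain and so becomes verbatim condition (i) here, namely that $J_X$ is one-to-one. Likewise, condition (iii) of Lemma~\ref{lem:zero}, asserting the existence of a bounded linear operator into a Banach space whose restriction to $X_0$ is injective, collapses to condition (ii) here, the existence of a one-to-one bounded linear map from $X$ into a Banach space.

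The only point meriting a word of care is the passage from condition (iv) of Lemma~\ref{lem:zero} to the point-separation property. With $X_0=X$, that condition reads $X\cap (X^*)^\perp=\{0\}$; since $(X^*)^\perp$ is by its very definition a subset of $X$, the intersection equals $(X^*)^\perp$ itself, and the condition simplifies to $(X^*)^\perp=\{0\}$. By the definition recalled just before the statement, this is precisely the assertion that $X^*$ separates the points of $X$, that is, that $X$ has the point-separation property, which is condition (iii) here.

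Having identified conditions (i)--(iii) of Lemma~\ref{lem:2} with conditions (ii)--(iv) of Lemma~\ref{lem:zero} (the remaining condition (i) of Lemma~\ref{lem:zero}, that $\Vert\cdot\Vert_e$ be a norm on $X$, being an equivalent reformulation that is simply carried along), the chain of equivalences furnished by Lemma~\ref{lem:zero} transfers at once and completes the argument. I anticipate no genuine obstacle: the statement is a corollary obtained through a single substitution, and the one piece of substantive content --- the Hahn--Banach identification of $N$ with $(X^*)^\perp$ --- has already been absorbed into the proof of Lemma~\ref{lem:zero}.
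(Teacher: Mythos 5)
Your proposal is correct and coincides with the paper's own argument: the paper obtains Lemma~\ref{lem:2} precisely by applying Lemma~\ref{lem:zero} with $X_0=X$, exactly as you do. Your identification of conditions (i)--(iii) here with conditions (ii)--(iv) of Lemma~\ref{lem:zero}, including the simplification $X\cap(X^*)^\perp=(X^*)^\perp$, is exactly what that specialization requires.
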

Let us also bring up the connection between the existence of finite-dimensional uncomplemented subspaces and the existence of nonzero vectors in the space $N$ defined as in \eqref{eq:DefN}.

\begin{lemma}\label{lem:FXUncomplemented}
Let $X$ be a quasi-Banach space. We have
\begin{equation*}
X\setminus (X^*)^\perp=\{ x\in X \colon \FF x \text{ is complemented in } X\}.
\end{equation*}
\end{lemma}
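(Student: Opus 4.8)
The plan is to prove the two inclusions separately, in each case exploiting the elementary correspondence between continuous functionals in $X^*$ and bounded rank-one idempotents on $X$, since both encode the statement that the line $\FF x$ is complemented. Throughout I read $\FF x$ as a genuine one-dimensional subspace, so that $x\neq 0$; this is consistent with the left-hand side, which automatically excludes $0$ because $0\in (X^*)^\perp$.

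For the inclusion $\subseteq$, I would start from $x\in X\setminus (X^*)^\perp$. By the very definition of $(X^*)^\perp$ there is $x^*\in X^*$ with $x^*(x)\neq 0$, and rescaling $x^*$ we may assume $x^*(x)=1$; note also that $x\neq 0$ since $0\in (X^*)^\perp$. I then define $P\colon X\to X$ by $P(y)=x^*(y)\,x$. Because $x^*$ is continuous and $x$ is fixed, $P\in\LL(X,X)$; it is idempotent since $P(P(y))=x^*(y)\,x^*(x)\,x=P(y)$; and its range is exactly $\FF x$. Hence $P$ is a continuous projection onto $\FF x$, so $\FF x$ is complemented.

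For the reverse inclusion $\supseteq$, I would assume that $\FF x$ is complemented (with $x\neq 0$) and fix a projection $P\in\LL(X,X)$ with range $\FF x$. Since $P(y)\in\FF x$ for every $y$, there is a unique scalar $\phi(y)$ with $P(y)=\phi(y)\,x$, and $\phi$ is plainly linear. The only step requiring genuine care is that $\phi$ belongs to $X^*$; this follows from the quasi-norm estimate $|\phi(y)|\,\Vert x\Vert=\Vert \phi(y)\,x\Vert=\Vert P(y)\Vert\le \Vert P\Vert\,\Vert y\Vert$, whence $|\phi(y)|\le (\Vert P\Vert/\Vert x\Vert)\,\Vert y\Vert$, so $\phi$ is bounded and therefore continuous. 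Finally, since $P$ is the identity on its range, $P(x)=x$, that is $\phi(x)\,x=x$, forcing $\phi(x)=1\neq 0$. Thus $\phi\in X^*$ does not vanish at $x$, and $x\notin (X^*)^\perp$.

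The main (and really only) obstacle is the continuity of the coordinate functional $\phi$ rather than its mere linearity: in a quasi-Banach space one has no Hahn--Banach extension theorem to fall back on, but the displayed estimate sidesteps this entirely and reduces the matter to the standard fact that a linear functional on a quasi-normed space is continuous exactly when it is bounded on the unit ball. With that observation in hand the argument closes, and both inclusions together yield the claimed equality.
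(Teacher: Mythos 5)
Your proof is correct. The paper in fact states this lemma \emph{without} proof, treating it as folklore, so there is no argument of the authors to compare against; what you have written is precisely the standard argument they leave implicit: a functional $x^*$ with $x^*(x)=1$ yields the bounded rank-one idempotent $y\mapsto x^*(y)\,x$ with range $\FF x$, and conversely the coordinate functional $\phi$ of a bounded projection $P$ onto $\FF x$ is continuous because homogeneity of the quasi-norm gives $|\phi(y)|\,\Vert x\Vert=\Vert P(y)\Vert\le\Vert P\Vert\,\Vert y\Vert$, after which $\phi(x)=1$ shows $x\notin(X^*)^\perp$. Your preliminary remark excluding $x=0$ is also a genuine (if minor) point rather than pedantry: $\{0\}$ is trivially complemented while $0\in(X^*)^\perp$, so the stated equality is literally true only when $\FF x$ is read as a one-dimensional subspace, which is clearly the reading the paper intends.
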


Another way of introducing the Banach envelope of a quasi-Banach space $X$ is by using the bidual. In this case the envelope map is the (not necessarily one-to-one) bidual map $h_X\colon X\to X^{**}$, defined as usual by $h_{X}(x)(f)=f(x)$, and $\widehat X$ is the closure of $h_X(X)$ in $X^{**}$. The more constructive approach we sketched above versus this functional approach has the advantage of inspiring useful generalizations of the notion of Banach envelope. For instance, we could define the $p$-Banach envelope of $X$ for $0<p<1$ similarly to the case $p=1$ (see, e.g., \cite{AACD2019} for a precise definition, and \cites{AlbiacAnsorena2015,AlbiacAnsorena2017+} for applications of $p$-envelopes).

The concept of Banach envelope was independently introduced in the 1970's by Peetre in \cite{Peetre1974}*{p.125}, and by Shapiro in \cite{Shapiro1977}*{p.~116}. Since then it has been considered by many authors in different spaces and settings. The Banach envelope of the spaces $\ell_{p}$ for $p<1$ is (represented by) $\ell_{1}$ via the inclusion map (see \cite{KPR1984}*{p.~28}), whereas the Banach envelope of $L_{p}$ for $p<1$ is zero \cite{Day1940}. More generally, Lorentz spaces $L_{p,q}$ over $[0,1]$ have trivial dual for $0<p\le 1$ and $0<q\le\infty$ with $(p,q)\neq (1,\infty)$ \cites{Hunt1966,Cwikel1973}. As for Lorentz sequence spaces, a similar argument to the one used in the case of $\ell_p$, yields that for $0<p<1$, $0<q<\infty$, and $0<r\le 1$, the Banach envelopes of the spaces $\ell_{p,q}$, of the separable part of $\ell_{p,\infty}$ (which we denote $\ell_{p,\infty}^{\, 0}$,) and of $\ell_{1,r}$ is $\ell_1$ via the inclusion map (see Theorem~\ref{thm:1BesBasisEnv} below). Identifying the Banach envelope of the non-separable space $\ell_{p,\infty}$ is more challenging. Cwikel \cite{Cwikel1973} admirably solved the problem by proving that the Banach envelope of $\ell_{p,\infty}$ for $0<p<1$ is $\ell_1$ via the inclusion map. Note that, by Theorem~\ref{thm:1BesBasisEnv}, this result yields that the dual of $\ell_{p,\infty}/\ell^{\,0}_{p,\infty}$ is trivial. Combining \cite{AEP1988}*{Theorem 3} with \cite{CRS2007}*{Theorem 2.4.14} yields that for $1<r<\infty$ the Banach envelope of $\ell_{1,r}$ is, via the inclusion map, the Banach space consisting of all sequences $f=(a_n)_{n=1}^\infty\in c_0$ such that
\[
\sum_{n=1}^\infty \frac{n^{r-1}}{\log^r(1+n)} (f^{**}(n))^r<\infty,
\]
where $f^{**}(n)=\max_{ |A|=n} n^{-1} \sum_{k\in A} |a_k|$. Similarly, the Banach envelope of $\ell_{1,\infty}^{\, 0}$ is the space known as the \emph{little Marcinkiewicz space}, consisting of all sequences $f\in c_0$ such that
\[
\lim_{n\to\infty} \frac{ n}{\log(n+1)} f^{**}(n)=0
\]
(see \cite{Markus1964}*{pp.~95--108}, \cite{Mitjagin1964}*{p.~823} and \cite{Arino1987}; see also \cite{AABW2019}*{Example 10.25}). In contrast, no `tangible' representation of the Banach envelope of $\ell_{1,\infty}$ is known (see \cite{Pietsch2009}).

In \cite{DRS1969}, Duren, Romberg, and Shields identified the dual space of the Hardy space $H_{p}(\DD)$ for $0<p<1$ and thus they obtained (a representation of) its Banach envelope (cf.\ \cite{Shapiro1976}). The Banach envelope of $H_{p}(\DD)$ is, via its inclusion into the space of analytic functions, a Bergman space which turns out to be isomorphic again to $\ell_{1}$ (see \cite{Woj1984}*{Theorem 8}). Hence, $\widehat{H_{p}(\DD)}$ can also be represented by the space $\ell_1$ via a suitable mapping, namely, the coefficient transform with respect to the (unique, see \cite{Woj1997}) unconditional basis of the space. In \cite{Kalton1977a}, Kalton gave an isomorphic representation of the Banach envelope of the separable part of nonlocally convex Orlicz spaces over $[0,1]$. Given a function $\alpha\colon I\to (0,\infty)$ on an interval $I$, we define $\widehat\alpha\colon I\to[0,\infty)$ by \[
\widehat\alpha(t)=\inf\left\{ \sum_{j=1}^m \lambda_j \alpha(t_j) \colon t_j\in I,\, \lambda_j\ge 0, \, \sum_{j=1}^m \lambda_j=1,\, t=\sum_{j=1}^m \lambda_j t_j\right\}.
\]

Let $\varphi$ be an Orlicz function and put $M_\varphi$ to denote the separable part of the Lorentz space $L_\varphi([0,1])$. If $\liminf_{x\to\infty} \varphi(x)/x=0$, then the Banach envelope of $M_\varphi$ is zero, while if $\liminf_{x\to\infty} \varphi(x)/x>0$ then the Banach envelope of $M_\varphi$ is $M_\psi$ via the inclusion map, where the convex Orlicz function $\psi$ is an extension of $\widehat{\varphi|_{[c,\infty)}}$, and $c>0$ is such that $\inf_{x\ge c} \varphi(x)/x>0$. Subsequently, Kalton obtained the analogous result for Orlicz sequence spaces \cite{Kalton1977b}. To be more specific, Kalton proved that if $\ell_\varphi$ is separable, i.e., $\ell_\varphi$ and its separable part $m_\varphi$ coincide, then its Banach envelope is $\ell_\psi$ under the inclusion map, where the convex Orlicz function $\psi$ is obtained by extension of $ \widehat{\varphi|_{[0,1]} }$. This also inspired further research in \cite{DN1982}, where a representation of the Banach envelope of non-locally convex Orlicz sequence spaces was obtained without the assumption of separability. In this article, Drewnowski and Nawrocki proved that if $\psi$ is as above, then $\ell_\psi$ is the Banach envelope of $\ell_\varphi$ if and only if $\varphi(2t) \le c\max\{ \varphi(t), \psi(bt)\}$ for small $t$, and some constants $b,c>0$. They also proved that, in general, the Banach envelope of $\ell_\varphi$ is represented by the closure of $J(\ell_\varphi)$ in $\ell_\psi\oplus \ell_\varphi/m_\varphi$, where the envelope map $J$ is given by $J(f)=(f,f+m_\varphi)$ for all $f\in\ell_\varphi$. Other more recent articles working specifically on Banach envelopes include, e.g., \cites{AABW2019, CW2017, Drewnowski1988, KamLin2014, KamMas2007, KaltonSukochev2008}.

Although Banach envelopes have played a key role in the understanding of the geometry of nonlocally convex quasi-Banach spaces, in our opinion a systematic treatment of the concept is missing. The lack of a reliable reference makes the authors impose different assumptions in order to feel at ease working with them. For instance, the Banach envelope of a quasi-Banach space is often times thought of as the ``smallest'' Banach space containing $X$. In doing so we are taking for granted that the envelope map $J_{X}\colon X\to \widehat X$ is one-to-one (see, e.g., \cite{Shapiro1976}), but as the examples above show, this is not always the case. Think of the space $L_p$ for $p<1$. One could argue that this is a pathological case, and that what happens is simply that $L_p$ has no Banach envelope. In fact, in all the examples cited above, the envelope map happens to be either zero or one-to-one. In spite of that, we encounter intermediate situations like the spaces $\ell_p\oplus L_p$ ($0<p<1$) whose dual is neither trivial nor separating, i.e., whose envelope map is neither zero nor one-to-one. Of course, one could keep on considering this as a pathological example, or perhaps too simple to start paying attention to quasi-Banach spaces whose envelope map is not one-to-one.

However, there are quasi-Banach spaces whose Banach envelope provides valuable information about the linear structure of the space despite the fact that the envelope map is not one-to-one (or it is unclear whether it is one-to-one or not). A classical example is the Lorentz space $L_{1,\infty}$ over $[0,1]$. It is known that even though every linear functional on $L_{1,\infty}$ vanishes on simple functions, the dual of $L_{1,\infty}$ is nontrivial (see \cite{CwiSag1971}). The Banach envelope of $L_{1,\infty}$ was described in \cite{CF1980}. More recently, the authors of \cite{AACD2019} used the Banach envelopes of the Lipschitz free $p$-spaces $\F_{p}(\RR^2)$ and $\F_{p}(\RR)$ for $0<p<1$ to show that they are not linearly isomorphic. And the same to tell apart $\F_p(\RR)$ from $\ell_p$ for $0<p<1$. And all that without knowing whether the envelope map of $\F_{p}(\RR^d)$ is one-to-one or not!

Of course, assuming the point-separation property is essential in some situations. Suppose you want to determine whether a given Banach space $Z$ can be the Banach envelope of some quasi-Banach space $X$. Since the Banach envelope of $Z\oplus L_p$ for $0<p<1$ is $Z$ via the canonical projection, the problem is nonsensical (or trivial) unless we impose that $X$ is non-locally convex and has the point-separation property. The answer to this question resides with the finite-dimensional structure of the space $Z$. Kalton proved in \cite{Kalton1978} that if $X$ is a non-locally convex quasi-Banach space with the {point-separation} property then its Banach envelope has no type $p$ for any $p > 1$. It follows that the spaces $\ell_{p}$ and $L_{p}$ for $1 < p < \infty$, or the Orlicz spaces $\ell_{\varphi}$ and $L_{\varphi}$ with the lower Orlicz-Matuszewska index $\alpha_{\varphi}>1$, cannot be Banach envelopes of $X$.

Summing up, we must pay attention to the point-separation property of quasi-Banach spaces. Indeed, finding sufficient conditions that guarantee that the envelope map is one-to-one has both theoretical and practical interest. To substantiate our claim, let us explain a situation that occurs in applications. Some quasi-Banach spaces $X$ (such as Lipschitz free $p$-spaces for $0<p<1$) are naturally constructed from a non-complete quasi-normed space $X_0$ that is dense in $X$. This definition leads to a precise description on how $X_0$ embeds into $\widehat X$ via $J_X$, but not on how $X$ does! Thus, if we want to know that the character of the members of $X_{0}$ inside $\widehat X$ is preserved in passing to its completion, it is important to know whether we can transfer properties from $J_X|_{X_0}$ to $J_X$. In particular, the following question arises:

\begin{question}\label{PietschPhilQEnv}
Let $X$ be a quasi-Banach space and let $X_{0}$ be a dense subspace of $X$. Assume that $X^{*}$ separates the points of $X_{0}$. Does it follow that $X^{*}$ separates the points of $X$? Equivalently, assuming that $J_X|_{X_0}\colon X_{0}\to \widehat X$ is one-to-one, is $J_{X}\colon X\to \widehat X$ one-to-one?
\end{question}

The authors were glad to learn about other colleagues concern with this kind of problem. For instance, in his study of the Banach envelope of $\ell_{1,\infty}$ (\cite{Pietsch2009}),
 Pietsch raised the following question, which he elevated to the category of `philosophical':
\begin{question}[\cite{Pietsch2009}*{p.~214}]\label{PietschPhilQ}
Let $X_0$ and $Z$ be normed spaces. Suppose that $Z$ is complete and that $T\colon X_0\to Z$ is a one-to-one linear operator. Is the extension of $T$ to the completion of $X_0$ one-to-one?
\end{question}

Although the answer to this question is negative in general as Pietsch showed, we can solve the problem in the positive in some important situations. A famous example also brought out by Pietsch is the canonical embedding, say $T$, of the `algebraic' tensor product $X^*\otimes X$ into $\LL(X,X)$, where $X$ is a Banach space. Grothendieck \cite{Grothendieck1955}*{\S{I}, p.~165} proved that the extension of $T$ to the projective tensor product $X^*\widehat\oplus X$ is one-to-one if and only if $X$ has the approximation property (AP for short). Hence, if $X$ has the AP, we can regard the vectors in $X^*\widehat\oplus X$ as operators.

Question~\ref{PietschPhilQ} makes perfect sense for quasi-normed spaces too. In \cite{Vogt1967} we find an example of this (extended) question that lies at the core of the theory of nonlocally convex quasi-Banach spaces. Given a measure space $(\Omega,\mu)$ and a quasi-Banach space $X$, let $L_1(\mu,X)$ be the quasi-Banach space of all strongly measurable functions $f\colon\Omega\to X$ such that $\int_\Omega \Vert f\Vert \, d\mu<\infty$. Vogt \cite{Vogt1967} endowed the tensor product $L_1(\mu)\otimes X$ with a suitable quasi-norm so that both natural maps $T\colon L_1(\mu)\otimes X \to L_1(\mu,X)$ and $\II\colon L_1(\mu)\otimes X \to X$ are bounded. Then he proved that the extension of $T$ to the completion, $L_{1,V}(\mu,X)$, of $( L_1(\mu)\otimes X,\Vert \cdot\Vert)$ is one-to-one. This way the elements of $L_{1,V}(\mu,X)$ `are functions' and, so, the extension of $\II$ to $L_{1,V}(\mu,X)$ becomes an integral for such functions.

Question~\ref{PietschPhilQEnv} might be considered as well as a particular case of Question~\ref{PietschPhilQ}. Since quasi-Banach spaces whose envelope map is not one-to-one can be considered as 'pathological', one could be inclined to believe that the problem should have a positive answer. To our surprise, the experts on the subject were unable to either write down a proof or show us an example that disproved this conjecture. Our aim in this note is to fill this gap in the literature about Banach envelopes by solving in the negative Question~\ref{PietschPhilQEnv}.

\subsection*{Background results}
We conclude the introduction by making a few remarks which we hope will assist the non-specialist reader. 

The orthogonal sets associated with a quasi-Banach space $X$ are defined as follows. Given a subset $B$ of $X^*$ we put
\[
B^\perp=\{ x\in X \colon x^*(x)=0 \quad \forall x^*\in B\}.
\]
Similarly, given a subset $A$ of $X$, we put
\[
A^\perp=\{ x^*\in X^* \colon x^*(x)=0 \quad \forall x\in A \}.
\]
We can naturally identify $A^\perp$ with the dual space of the quotient space $X/V$, where $V$ is the closed subspace of $X$ spanned by $A$. To be precise, the dual map $Q^*\colon (X/V)^*\to X^*$ of the canonical map $Q\colon X\to X/V$ is an isometric embedding whose range is $V^\perp=A^\perp$. As a consequence we obtain
\begin{equation}\label{eq:perpep}
((X/V)^*)^\perp={V^{\perp\perp}}/{V}.
\end{equation}

Suppose that $Z$ is another quasi-Banach space. A bounded linear map $Q\colon X\to Z$ is said to be a \emph{quotient map} if $Q(B_X)$ contains a neighbourhood of zero. Note that $Q$ is a quotient map if and only if the canonical map $\widetilde Q\colon X/\Ker(Q)\to Z$ is an isomorphism. In case that $\widetilde{Q}$ is a $\lambda$-isomorphism, we say that $Q$ is a $\lambda$-quotient map. By the Open Mapping theorem, the continuous linear map $Q$ is a quotient map if and only if it is onto. Hence, $Q$ is a $\lambda$-quotient map if and only if there is short exact sequence
\[
0 \rightarrow Y \overset{S} \rightarrow X \overset{Q}\rightarrow Z \rightarrow 0,
\]
for some quasi-Banach space $Y$ and some linear map $S$ with $\Vert S\Vert\le\lambda/\Vert Q\Vert$.

\begin{proposition}\label{lem:quotientenvelope}
Suppose $X$ and $Z$ are quasi-Banach spaces. Let $Q\colon X\to Z$ be a $\lambda$-quotient map for some $\lambda\ge 1$. Then $\widehat{Q} \colon \widehat{X} \to\widehat{Z}$ is a $\lambda$-quotient map. Moreover,
\[
\Ker(\widehat{Q})=\overline{ J_X(\Ker(Q))}.
\]
\end{proposition}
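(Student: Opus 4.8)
The plan is to first produce the map $\widehat Q$ from the universal property, and then to push the quotient structure of $Q$ through the explicit description \eqref{BEnorm} of the envelope norm. Applying the universal property of $\widehat X$ to the bounded operator $J_Z\circ Q\colon X\to\widehat Z$ (whose target is Banach) yields a unique bounded map $\widehat Q\colon\widehat X\to\widehat Z$ with $\widehat Q\circ J_X=J_Z\circ Q$ and $\Vert\widehat Q\Vert\le\Vert J_Z\Vert\,\Vert Q\Vert\le\Vert Q\Vert$. The engine for both assertions is the single identity $Q(\conv(B_X))=\conv(Q(B_X))$, valid because $Q$ is linear. Since $Q$ is a $\lambda$-quotient map, $Q(B_X)$ contains an open ball $\{z:\Vert z\Vert_Z<r\}$ with radius $r$ controlled by $\lambda$ and $\Vert Q\Vert$; combining this with the elementary fact that
\[
\conv(\{z:\Vert z\Vert_{Z}<r\})\supseteq\{z:\Vert z\Vert_{e,Z}<r\}
\]
(which holds because $\Vert\cdot\Vert_{e,Z}$ is the Minkowski functional of $\conv(B_Z)$) gives the key containment $Q(\conv(B_X))\supseteq\{z\in Z:\Vert z\Vert_{e,Z}<r\}$.

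For the quotient-map assertion I would use that $J_X(\conv(B_X))\subseteq B_{\widehat X}$ together with $\widehat Q\circ J_X=J_Z\circ Q$, so that
\[
\widehat Q(B_{\widehat X})\supseteq\widehat Q(J_X(\conv(B_X)))=J_Z(Q(\conv(B_X)))\supseteq J_Z(\{z:\Vert z\Vert_{e,Z}<r\}),
\]
and the rightmost set is dense in the open $r$-ball of $\widehat Z$. Passing to closures and invoking the completeness of $\widehat Z$ via the standard open-mapping upgrade (a dense image of a ball forces the image of a slightly smaller ball), one concludes that $\widehat Q$ is onto with quotient radius arbitrarily close to $r$. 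Together with $\Vert\widehat Q\Vert\le\Vert Q\Vert$ this shows the induced map $\widehat X/\Ker\widehat Q\to\widehat Z$ is a $\lambda$-isomorphism, i.e. $\widehat Q$ is a $\lambda$-quotient map; since the envelope operation is norm-nonincreasing, no constant degrades and the same $\lambda$ is inherited.

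For the kernel, the inclusion $\overline{J_X(\Ker(Q))}\subseteq\Ker(\widehat Q)$ is immediate: if $x\in\Ker(Q)$ then $\widehat Q(J_X(x))=J_Z(Q(x))=0$, and $\Ker(\widehat Q)$ is closed by continuity. The reverse inclusion is the crux. From the key containment I would extract a controlled-preimage statement: for every $z\in Z$ and every $\epsilon>0$ there is $x\in X$ with $Q(x)=z$ and $\Vert x\Vert_{e,X}\le r^{-1}\Vert z\Vert_{e,Z}+\epsilon$. Then, given $\xi\in\Ker(\widehat Q)$, density of $J_X(X)$ in $\widehat X$ furnishes $x_n\in X$ with $J_X(x_n)\to\xi$; continuity gives $\Vert Q(x_n)\Vert_{e,Z}=\Vert\widehat Q(J_X(x_n))\Vert_{\widehat Z}\to 0$, so choosing preimages $x_n'$ of $Q(x_n)$ with $\Vert x_n'\Vert_{e,X}\to 0$ produces $x_n-x_n'\in\Ker(Q)$ and $J_X(x_n-x_n')=J_X(x_n)-J_X(x_n')\to\xi$, whence $\xi\in\overline{J_X(\Ker(Q))}$.

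I expect the main obstacle to be precisely this reverse kernel inclusion, and within it the controlled-preimage lemma: the point is to convert the purely set-theoretic containment of balls into a quantitative correction that pulls an arbitrary approximating sequence back into $\Ker(Q)$ without inflating its envelope norm. A secondary technical point is the constant bookkeeping in the quotient-map step, where upgrading a dense subset of a ball to a genuine neighborhood of zero genuinely uses the completeness of $\widehat Z$; the inequality $\Vert\cdot\Vert_{e}\le\Vert\cdot\Vert$ is what guarantees that the radius $r$ and the norm bound both transfer cleanly so that the final constant is still $\lambda$.
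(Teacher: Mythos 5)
Your proof is correct, but it runs along a genuinely different track from the paper's. You work entirely on the primal side: the identity $Q(\conv(B_X))=\conv(Q(B_X))$, the ball containment $\conv(\{z\colon \Vert z\Vert_Z<r\})\supseteq\{z\colon \Vert z\Vert_{e,Z}<r\}$, and the open-mapping upgrade give surjectivity of $\widehat{Q}$ with the right constant, while your controlled-preimage lemma (which is indeed valid: either scale the key containment as you indicate, or split $z=\sum_i z_i$ almost optimally for $\Vert\cdot\Vert_{e,Z}$, lift each piece with quasi-norm control, and use subadditivity of $\Vert\cdot\Vert_{e,X}$ together with $\Vert\cdot\Vert_{e,X}\le\Vert\cdot\Vert_X$) pulls an approximating sequence for a point of $\Ker(\widehat{Q})$ back into $\Ker(Q)$, giving the kernel identity. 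The paper instead argues entirely by duality: it factors $\widehat{Q}=T\circ P$ through $U=\widehat{X}/\overline{J_X(\Ker(Q))}$ and shows that $T^*\colon(\widehat{Z})^*\to U^*$ is a $\lambda$-isomorphism, using the isometric identifications $U^*\cong W^\perp\cong V^\perp$ and $(\widehat{Z})^*\cong Z^*$ together with the fact that $z^*\mapsto z^*\circ Q$ is a $\lambda$-isomorphism onto $V^\perp$ precisely because $Q$ is a $\lambda$-quotient map; Hahn--Banach then upgrades $T^*$ being a $\lambda$-isomorphism to $T$ being one, which delivers the quotient property and the kernel identity in a single stroke. Your route is more elementary and quantitative (no dual spaces appear at all), at the cost of two technical lemmas; the paper's route is shorter but leans on Banach-space duality. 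One correction to your bookkeeping: the ``dense image of a ball forces the image of a slightly smaller ball'' upgrade uses completeness of the \emph{domain} $\widehat{X}$ (that is where the series of successive corrections must converge), not of the target $\widehat{Z}$ as you state; fortunately $\widehat{X}$ is complete, so nothing in your argument breaks.
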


\begin{proof} Set $V=\Ker(Q)$. Since $\widehat{Q} \circ J_X=J_Z\circ Q$, we have $\widehat{Q}(x)=0$ for all $x\in W=\overline{ J_X(V)}$. Hence, if 
\[
P\colon \widehat{X}\to U:={\widehat{X}}/{W}
\]
is the canonical map, there is $T\colon U\to \widehat{Z}$ such that $T\circ P=\widehat{Q}$. By the Hahn-Banach theorem, it suffices to prove that $T^*\colon (\widehat{Z})^*\to U^*$ is a $\lambda$-isomorphism. 

The space $U^*$  is isometrically isomorphic to $W^\perp$. The  same can be said about the spaces $(\widehat{Z})^*$ and $Z^*$, and about  $(\widehat{X})^*$ and $X^*$. Besides, the latter isometry restricts to a natural identification between $W^\perp$ and $V^\perp$. Thus, we need only prove that the map $T'\colon Z^*\to V^\perp$ given by
\[
z^*\circ J_Z\mapsto z^*\circ T\circ P\circ J_X, \quad z^*\in (\widehat{Z})^*,
\]
is a $\lambda$-isomorphism. Since $T\circ P\circ J_X=J_Z\circ Q$, the map $T'$ is given by $T'(z^*)=z^*\circ Q$ for all $z^*\in Z^*$. Since $Q$ is a $\lambda$-quotient map with null space $V$, we are done.
\end{proof}

\begin{corollary}\label{cor:1} Let $X$ and $Z$ be quasi-Banach spaces, and suppose that $Q\colon X \to Z$ is a quotient map. Then, $\widehat{Q}\colon\widehat{X}\to \widehat{Z}$ is an isomorphism if and only if
 $\Ker(Q)\subseteq (X^*)^\perp$.
\end{corollary}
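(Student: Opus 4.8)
The plan is to leverage Proposition~\ref{lem:quotientenvelope} so that the only real work is a kernel computation. Write $V=\Ker(Q)$. Since $Q$ is a quotient map it is in particular onto, so by Proposition~\ref{lem:quotientenvelope} (applied with, say, $\lambda=\Vert Q\Vert$ times the relevant constant) the induced map $\widehat{Q}\colon \widehat X\to\widehat Z$ is again a quotient map, and in particular surjective. Consequently $\widehat{Q}$ is an isomorphism if and only if it is one-to-one, i.e., if and only if $\Ker(\widehat{Q})=\{0\}$. This reduces the whole statement to deciding when $\Ker(\widehat{Q})$ is trivial.

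Next I would invoke the ``moreover'' clause of Proposition~\ref{lem:quotientenvelope}, which identifies
\[
\Ker(\widehat{Q})=\overline{J_X(V)}.
\]
Because $V$ is a linear subspace, $J_X(V)$ is a subspace of $\widehat X$ and in particular contains $0$; hence its closure equals $\{0\}$ exactly when $J_X(V)=\{0\}$. In other words, $\Ker(\widehat{Q})=\{0\}$ if and only if $V\subseteq\Ker(J_X)$. Finally, recalling from the construction of the Banach envelope that $\Ker(J_X)=N=(X^*)^\perp$, I would chain these equivalences: $\widehat{Q}$ is an isomorphism $\iff \Ker(\widehat{Q})=\{0\}\iff V\subseteq\Ker(J_X)=(X^*)^\perp$, which is precisely $\Ker(Q)\subseteq (X^*)^\perp$, as claimed.

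I do not anticipate a genuine obstacle, since essentially all the content has been packaged into Proposition~\ref{lem:quotientenvelope}; the corollary is the specialization to $\lambda=1$ combined with the surjectivity of $\widehat Q$. The only point that deserves a line of care is the passage from $\overline{J_X(V)}=\{0\}$ to $J_X(V)=\{0\}$, but this is immediate because $\{0\}$ is closed and $J_X(V)$ is a nonempty subspace, so its closure is trivial precisely when the subspace itself is.
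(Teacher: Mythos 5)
Your proposal is correct and follows essentially the same route as the paper: both reduce the statement to Proposition~\ref{lem:quotientenvelope}, use the identification $\Ker(\widehat{Q})=\overline{J_X(\Ker(Q))}$, note that the closure of a subspace is null exactly when the subspace is, and finish with $\Ker(J_X)=(X^*)^\perp$. The only difference is expository: you spell out the intermediate step that a quotient map is an isomorphism precisely when its kernel vanishes, which the paper compresses into its first sentence.
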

\begin{proof} By Proposition~\ref{lem:quotientenvelope}, $\widehat{Q}$ is an isomorphism if and only the closure of $W:=J_X(\Ker(Q))$ is null. Clearly, the closure of $W$ is null if and only if $W$ is null. In turn, $W=\{0\}$ if and only if $\Ker(Q)\subseteq\Ker(J_X)$. Since $\Ker(J_X)=(X^*)^\perp$, we are done.
\end{proof}

 Our next result is just a re-formulation of the ``if'' part of Corollary~\ref{cor:1}.
\begin{corollary}\label{cor:2} Let $X$ and $Z$ be quasi-Banach spaces and suppose that $Q\colon X \to Z$ is a quotient map with $\Ker(Q)\subseteq N:=(X^*)^\perp$. Then, if $\lambda\ge 1$.
\begin{enumerate}[(i)]
\item $\widehat{Z}$ is a $\lambda$-isomorphic representation of the Banach envelope of $X$ via $Q$, and
\item $\widehat{X}$ is a $\lambda$-isomorphic representation of the Banach envelope of $Z$, via the unique linear bounded map $J\colon Z\to \widehat{X}$ with $J\circ Q=J_X$. 
\end{enumerate}
In particular, $\widehat{X}$ is an isometric representation of the Banach envelope of $X/N$ via the inclusion map.
\end{corollary}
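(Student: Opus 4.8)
The plan is to deduce everything from Proposition~\ref{lem:quotientenvelope}, exactly as the remark preceding the statement suggests.

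First I would reduce item (i) to the single assertion that $\widehat{Q}\colon\widehat{X}\to\widehat{Z}$ is a $\lambda$-isomorphism. The map realizing $\widehat{Z}$ as a representation of the Banach envelope of $X$ ``via $Q$'' is $J_Z\circ Q\colon X\to\widehat{Z}$; since $\widehat{Z}$ is already a Banach space one has $J_{\widehat{Z}}=\Id$, so comparing the defining relations $\widehat{J_Z\circ Q}\circ J_X=J_Z\circ Q=\widehat{Q}\circ J_X$ and invoking the density of $J_X(X)$ in $\widehat{X}$ yields $\widehat{J_Z\circ Q}=\widehat{Q}$. Now Proposition~\ref{lem:quotientenvelope} says that $\widehat{Q}$ is a $\lambda$-quotient map with $\Ker(\widehat{Q})=\overline{J_X(\Ker(Q))}$. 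The hypothesis $\Ker(Q)\subseteq N=\Ker(J_X)$ forces $J_X(\Ker(Q))=\{0\}$, so $\widehat{Q}$ is injective; an injective $\lambda$-quotient map is a $\lambda$-isomorphism, which settles (i).

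For (ii) I would first produce $J$. Since $\Ker(Q)\subseteq\Ker(J_X)$, the operator $J_X$ factors through $X/\Ker(Q)$, and because $Q$ is a quotient map the induced map $X/\Ker(Q)\to Z$ is an isomorphism; composing with its inverse gives a bounded $J\colon Z\to\widehat{X}$ with $J\circ Q=J_X$, while the surjectivity of $Q$ makes $J$ unique. The task is then to show that $\widehat{J}\colon\widehat{Z}\to\widehat{X}$ is a $\lambda$-isomorphism, and the natural candidate for its inverse is $\widehat{Q}$. I would verify the two identities $\widehat{J}\circ\widehat{Q}=\Id_{\widehat{X}}$ and $\widehat{Q}\circ\widehat{J}=\Id_{\widehat{Z}}$ by testing them on the dense ranges $J_X(X)$ and $J_Z(Z)$: precomposing with $J_X$ (resp.\ $J_Z$) and using $\widehat{J}\circ J_Z=J$, $\widehat{Q}\circ J_X=J_Z\circ Q$, and $J\circ Q=J_X$ reduces both to $J\circ Q=J_X$ and $\widehat{Q}\circ J=J_Z$, the latter being itself a consequence of the surjectivity of $Q$. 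Hence $\widehat{J}=\widehat{Q}^{-1}$, and since $\widehat{Q}$ is a $\lambda$-isomorphism by (i), so is $\widehat{J}$.

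Finally, for the concluding clause I would specialize to $Z=X/N$ with $Q$ the canonical quotient map, which is a $1$-quotient map with $\Ker(Q)=N$; thus $\lambda$ may be taken equal to $1$, and item (ii) gives that $\widehat{X}$ is a $1$-isomorphic, i.e.\ isometric, representation of the Banach envelope of $X/N$ via the unique $J$ with $J\circ Q=J_X$. That $J$ is exactly the inclusion map $I\colon X/N\to\widehat{X}$ appearing in the construction of $\widehat{X}$, because by definition $J_X=I\circ Q$. I expect the only genuinely delicate point to be the bookkeeping with the envelope functor—the identity $\widehat{J_Z\circ Q}=\widehat{Q}$ and the mutual-inverse relations between $\widehat{J}$ and $\widehat{Q}$—together with the remark that the isomorphism constant is preserved on passing to the inverse, so that a single $\lambda$ serves for both (i) and (ii).
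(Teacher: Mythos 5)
Your proposal is correct and follows essentially the same route as the paper: the paper offers no separate proof, remarking only that Corollary~\ref{cor:2} is a re-formulation of the ``if'' part of Corollary~\ref{cor:1}, whose proof is precisely your key step --- Proposition~\ref{lem:quotientenvelope} shows $\Ker(\widehat{Q})=\overline{J_X(\Ker(Q))}=\{0\}$, so $\widehat{Q}$ is an injective $\lambda$-quotient map and hence a $\lambda$-isomorphism. Your mutual-inverse verification that $\widehat{J}=\widehat{Q}^{-1}$ and the identification $J=I$ in the final clause are exactly the bookkeeping the paper leaves implicit.
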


The following proposition is an immediate consequence of Corollary~\ref{cor:2}, and the subsequent lemma a partial converse of it.
\begin{proposition}\label{prop:XNPointSeparation} Suppose $X$ is a quasi-Banach space, and let $N=(X^*)^\perp$. Then $X/N$ has the point-separation property.
\end{proposition}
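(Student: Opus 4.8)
The plan is to avoid re-deriving anything about completions and instead read the result straight off the duality formula \eqref{eq:perpep}, taking $V=N$. By definition, the point-separation property of $X/N$ is the assertion that $((X/N)^*)^\perp=\{0\}$, so in view of \eqref{eq:perpep} it suffices to show that the bi-annihilator $N^{\perp\perp}$ collapses back onto $N$ itself.

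First I would compute $N^\perp$. By the very definition of $N=(X^*)^\perp$, every functional $x^*\in X^*$ annihilates $N$; since $N^\perp$ is by definition a subset of $X^*$, this forces $N^\perp=X^*$. Taking annihilators once more gives $N^{\perp\perp}=(N^\perp)^\perp=(X^*)^\perp=N$. Here I am using that $N$ is a genuine closed subspace of $X$—being an intersection of kernels of continuous functionals—so that \eqref{eq:perpep} applies with $A=V=N$. Substituting into \eqref{eq:perpep} then yields $((X/N)^*)^\perp=N^{\perp\perp}/N=N/N=\{0\}$, which is exactly the point-separation property for $X/N$.

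I expect no genuine obstacle: the only thing to watch is the bookkeeping of the two annihilation operations (one producing a subset of $X$, the other a subset of $X^*$), together with the observation that $N^\perp$ is all of $X^*$ rather than some proper subspace. The verification that $N^\perp=X^*$ is where the definition of $N$ does all the work, so I would state it explicitly rather than leave it implicit.

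For readers who prefer the route flagged in the text, the same conclusion drops out of Corollary~\ref{cor:2}: applying it with $Z=X/N$ and $Q$ the canonical quotient map (so that $\Ker(Q)=N\subseteq N$), the ``in particular'' clause identifies $\widehat{X}$ with the Banach envelope of $X/N$ via the inclusion $I\colon X/N\to\widehat{X}$. Since $\Vert\cdot\Vert_{e}$ is a genuine norm on $X/N$ (we quotiented out precisely its null space), this inclusion is one-to-one; as the envelope map $J_{X/N}$ factors through $I$, it is one-to-one as well, and Lemma~\ref{lem:2} then delivers the point-separation property.
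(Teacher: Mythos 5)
Your proposal is correct, and your primary argument takes a genuinely different route from the paper's. The paper obtains the proposition as an immediate consequence of Corollary~\ref{cor:2}: taking $Z=X/N$ and $Q$ the canonical quotient map (so $\Ker(Q)=N$), part (ii) and the ``in particular'' clause identify $\widehat{X}$ as an isometric representation of the Banach envelope of $X/N$ via the inclusion $I\colon X/N\to\widehat{X}$; since $N$ is precisely the null space of $\Vert\cdot\Vert_{e}$, the map $I$ is one-to-one, hence so is $J_{X/N}=(\widehat{I}\,)^{-1}\circ I$, and Lemma~\ref{lem:2} concludes --- this is exactly your closing paragraph. Your main argument instead works entirely at the level of annihilators: from the definition of $N=(X^*)^\perp$ you get $N^\perp=X^*$, hence $N^{\perp\perp}=(X^*)^\perp=N$, and since $N$ is closed (an intersection of kernels of continuous functionals), identity \eqref{eq:perpep} applies with $V=N$ to give $((X/N)^*)^\perp=N^{\perp\perp}/N=\{0\}$. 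This computation is sound and strictly more elementary: it bypasses Banach envelopes, Proposition~\ref{lem:quotientenvelope}, and Corollary~\ref{cor:2} altogether, and it makes the proposition the exact mirror image of the paper's own proof of Lemma~\ref{lem:XVPointSeparation}, so that the statement and its partial converse both fall out of the single identity \eqref{eq:perpep}. What the paper's route buys in exchange is additional information beyond point separation, namely the identification of $\widehat{X}$ as the Banach envelope of $X/N$ via the inclusion map, a fact of independent interest in the rest of the paper; your route proves only the separation statement, but does so with lighter machinery.
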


\begin{lemma}\label{lem:XVPointSeparation}Let $V$ be a closed subspace of a quasi-Banach space $X$. Suppose that $X/V$ has the point-separation property. Then $(X^*)^\perp\subseteq V$.
\end{lemma}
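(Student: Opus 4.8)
The plan is to read off the conclusion from the duality identity \eqref{eq:perpep} once we unwind what the point-separation property of $X/V$ says. First I would record that, by the definition of the point-separation property, the hypothesis that $X/V$ enjoys it means precisely $((X/V)^*)^\perp=\{0\}$. Feeding this into \eqref{eq:perpep}, which identifies $((X/V)^*)^\perp$ with $V^{\perp\perp}/V$, yields $V^{\perp\perp}/V=\{0\}$. Since the inclusion $V\subseteq V^{\perp\perp}$ holds automatically, this collapses to the equality $V^{\perp\perp}=V$.

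Next I would invoke the elementary monotonicity of the annihilator operation. Because $V^\perp$ is by definition a subset of $X^*$, passing to preannihilators reverses the inclusion and gives $(X^*)^\perp\subseteq (V^\perp)^\perp=V^{\perp\perp}$. Combining this with the previous step produces $(X^*)^\perp\subseteq V^{\perp\perp}=V$, which is exactly the desired conclusion.

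There is no genuine obstacle here; the entire content of the lemma is already packaged in \eqref{eq:perpep}, whose own proof rests on the Hahn--Banach identification of $(X/V)^*$ with $V^\perp$. The only point deserving a moment's care is the correct reading of the triviality of the quotient $V^{\perp\perp}/V$, namely that it forces $V^{\perp\perp}=V$ rather than merely $V^{\perp\perp}\subseteq V$; but this is immediate from the trivial inclusion $V\subseteq V^{\perp\perp}$. In particular, the argument uses the closedness of $V$ only insofar as it is implicit in forming the quotient $X/V$ and in the validity of \eqref{eq:perpep}.
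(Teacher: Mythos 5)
Your proof is correct and follows exactly the paper's own argument: read the point-separation hypothesis as $((X/V)^*)^\perp=\{0\}$, use \eqref{eq:perpep} to conclude $V^{\perp\perp}=V$, and finish with the trivial inclusion $(X^*)^\perp\subseteq V^{\perp\perp}$. Nothing to add.
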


\begin{proof}We have $\{0\} = ((X/V)^*)^\perp$. Then, by \eqref{eq:perpep},
$V^{\perp\perp}=V$. Finally, the inclusion $(X^*)^\perp\subseteq V^{\perp\perp}$ follows from the mere definition of orthogonal set.
\end{proof}

Let $0<p\le\infty$. A biorthogonal system $(\xx_n,\xx_n^*)_{n=1}^\infty$ is said to be \emph{$p$-Besselian} if the coefficient transform $\Fou\colon X\to\FF^\NN$ given by $\Fou(f)=(\xx_n^*(f))_{n=1}^\infty$ is a bounded operator from $X$ into $\ell_p$. The biorthogonal system $(\xx_n,\xx_n^*)_{n=1}^\infty$ is said to be semi-normalized if $0<\inf_n \Vert \xx_n\Vert$ and $\sup_n \Vert \xx_n\Vert<\infty$.

$(\ee_n)_{n=1}$ denotes the unit vector system of $\FF^\NN$, i.e., $\ee_n=(\delta_{k,n})_{k=1}^\infty$, where $\delta_{k,n}=1$ if $k=n$ and $\delta_{k,n}=0$ otherwise.

\begin{theorem}\label{thm:1BesBasisEnv}Let $X$ be a quasi-Banach space with a semi-normalized $1$-Besselian biorthogonal system $(\xx_n,\xx_n^*)_{n=1}^\infty$. Put
\[
Y=\overline{\spn(\xx_n\colon n\in\NN)}.
\]
Then the Banach envelope of $X$ can be represented by $\ell_1\oplus \widehat{X/Y}$ via the map
\[
x\mapsto (\Fou(x), J_{X/Y}(x+Y)).
\]
\end{theorem}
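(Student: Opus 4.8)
The plan is to show that the map in the statement, which we denote
\[
\Phi\colon X\to Z:=\ell_1\oplus \widehat{X/Y},\qquad \Phi(x)=(\Fou(x),\, J_{X/Y}(x+Y)),
\]
induces an isomorphism $\widehat\Phi\colon\widehat X\to Z$, which is precisely what it means for $Z$ to represent the Banach envelope via $\Phi$. First I would record that $\Phi$ is bounded: its first coordinate is bounded because $(\xx_n,\xx_n^*)_{n}$ is $1$-Besselian, and its second coordinate is the composition of the quotient map $Q\colon X\to X/Y$ with the norm-one envelope map $J_{X/Y}$. The universal property then produces $\widehat\Phi$ with $\widehat\Phi\circ J_X=\Phi$, and, exactly as in the proof of Proposition~\ref{lem:quotientenvelope}, by the Hahn--Banach theorem it suffices to prove that the adjoint is an isomorphism. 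Under the canonical identifications $(\widehat X)^*=X^*$ and $(\widehat{X/Y})^*=(X/Y)^*$ (a space and its envelope share the same dual), the adjoint $\widehat\Phi^*$ is identified with the map $\Phi^*$, so the whole problem reduces to showing that $\Phi^*$ is an isomorphism of $Z^*$ onto $X^*$.

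The heart of the argument is the identification of $X^*$. Using $\ell_1^*=\ell_\infty$ we have $Z^*=\ell_\infty\oplus (X/Y)^*$, and unwinding the definition of $\Phi$ gives
\[
\Phi^*(\phi,w^*)=\Fou^*(\phi)+Q^*(w^*),\qquad (\phi,w^*)\in \ell_\infty\oplus(X/Y)^*,
\]
where $\Fou^*(\phi)(x)=\sum_{n}\phi_n\,\xx_n^*(x)$ (a convergent series, since $\Fou(x)\in\ell_1$) and $Q^*\colon (X/Y)^*\to X^*$ is the isometric embedding with range $Y^\perp$ recalled in the Background. I would then establish the algebraic direct-sum decomposition
\[
X^*=\Fou^*(\ell_\infty)\oplus Y^\perp.
\]
Given $x^*\in X^*$, the sequence $\phi=(x^*(\xx_n))_{n}$ lies in $\ell_\infty$ because $\sup_n\Vert\xx_n\Vert<\infty$, and biorthogonality yields $\Fou^*(\phi)(\xx_m)=\phi_m=x^*(\xx_m)$; hence $x^*-\Fou^*(\phi)$ annihilates every $\xx_m$, so it annihilates $Y=\overline{\spn(\xx_n\colon n\in\NN)}$ and belongs to $Y^\perp$. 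This proves $X^*=\Fou^*(\ell_\infty)+Y^\perp$. For directness, if $\Fou^*(\phi)\in Y^\perp$ then $\phi_m=\Fou^*(\phi)(\xx_m)=0$ for every $m$, so $\phi=0$; in particular $\Fou^*$ is injective.

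Assembling these facts, $\Phi^*$ is a bijection from $Z^*=\ell_\infty\oplus(X/Y)^*$ onto $X^*=\Fou^*(\ell_\infty)\oplus Y^\perp$: it is injective because $\Fou^*$ is injective and $Q^*$ is an isometric embedding whose range meets $\Fou^*(\ell_\infty)$ only at $0$, and it is surjective by the decomposition above (each $y^*\in Y^\perp$ is $Q^*(w^*)$ for a unique $w^*\in(X/Y)^*$). Being a bounded linear bijection between Banach spaces, $\Phi^*$ is an isomorphism by the Open Mapping theorem, and therefore $\widehat\Phi$ is an isomorphism of $\widehat X$ onto $Z$, as required.

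The step I expect to be the main obstacle is the direct-sum decomposition of $X^*$, and specifically the verification that the ``$\ell_\infty$-coordinate'' of an arbitrary functional $x^*$ is exactly the bounded sequence $(x^*(\xx_n))_n$ and that subtracting its image under $\Fou^*$ lands one in $Y^\perp$. This is where the hypotheses are consumed: the semi-normalization bound $\sup_n\Vert\xx_n\Vert<\infty$ guarantees $(x^*(\xx_n))_n\in\ell_\infty$, while the $1$-Besselian property guarantees that $\Fou^*$ maps $\ell_\infty$ into $X^*$, so that the two summands genuinely live inside $X^*$. A secondary point requiring care is the bookkeeping of the dual identifications --- that $Z^*=\ell_\infty\oplus(X/Y)^*$, that $(X/Y)^*$ sits isometrically in $X^*$ as $Y^\perp$ via $Q^*$, and that $\widehat\Phi^*$ really is identified with $\Phi^*$ --- all of which follow from the facts collected in the Background together with the coincidence of the duals of a space and its envelope.
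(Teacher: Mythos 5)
Your proof is correct, but it runs along a genuinely different route from the paper's. The paper argues on the primal side: since $\sup_n\Vert J_X(\xx_n)\Vert<\infty$, there is a bounded operator $S\colon\ell_1\to\widehat X$ with $S(\ee_n)=J_X(\xx_n)$, and the extension $\widehat{\Fou}$ of $\Fou$ satisfies $\widehat{\Fou}\circ S=\Id_{\ell_1}$; this splits off $S(\ell_1)=\overline{J_X(Y)}$ as a complemented copy of $\ell_1$ inside $\widehat X$, and the complementary factor $\widehat X/S(\ell_1)$ is then identified with $\widehat{X/Y}$ by applying Proposition~\ref{lem:quotientenvelope} to the quotient map $X\to X/Y$. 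You instead dualize from the outset: after reducing (by the same Hahn--Banach/duality principle that the paper uses inside the proof of Proposition~\ref{lem:quotientenvelope}) to showing that the adjoint of the envelope extension of $\Phi$ is an isomorphism, you prove the algebraic decomposition $X^*=\Fou^*(\ell_\infty)\oplus Y^\perp$ directly from biorthogonality and $\sup_n\Vert\xx_n\Vert<\infty$, and conclude with the Open Mapping theorem. Both arguments consume the hypotheses in the same places ($1$-Besselian for boundedness of $\Fou$, respectively $\Fou^*$; the upper semi-normalization bound for the $\ell_\infty$-coordinate, respectively for $S$), but they buy slightly different things: the paper's construction exhibits the structural fact that $\overline{J_X(Y)}$ is a complemented subspace of $\widehat X$ isomorphic to $\ell_1$, with maps explicit enough to track constants, whereas your argument sidesteps Proposition~\ref{lem:quotientenvelope} as a black box and yields as a byproduct the clean dual identification $X^*\cong\ell_\infty\oplus Y^\perp$, which is of independent interest. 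One small point of care in your write-up: the reduction ``adjoint isomorphism $\Rightarrow$ isomorphism'' needs the Open Mapping theorem in addition to Hahn--Banach (to get that $T^*$ surjective forces $T$ bounded below), but since you invoke OMT anyway, and the paper glosses this point identically, nothing is missing.
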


\begin{proof}Since $\sup_n \Vert J_X(\xx_n)\Vert\le \sup_n \Vert \xx_n\Vert<\infty$, there is a bounded linear operator $S\colon\ell_1\to \widehat{X}$ such that $S(\ee_n)= J_X(\xx_n)$ for all $n\in\NN$. We also have a bounded linear operator $\widehat{\Fou} \to \ell_1$ such that $\widehat{\Fou}\circ J_X=\Fou$. We infer that $\widehat{\Fou}\circ S=\Id_{\ell_1}$. Consequently, there is an isomorphism $P$ from $\widehat{X}$ onto $\ell_1\oplus \widehat{X}/S(\ell_1)$ such that
\[
P(x)= (\widehat{\Fou}(x), x+S(\ell_1)), \quad x\in \widehat{X}.
\]
Since $S(\ell_1)=\overline{J_X(Y)}$, applying Proposition~\ref{lem:quotientenvelope} yields an isomorphism
$T\colon \widehat{X}/S(\ell_1) \to \widehat{X/Y}$ such that
\[
T ( J_X(x)+S(\ell_1)) = J_{X/Y} (x+Y), \quad x\in X.
\]
Therefore, the Banach envelope of $X$ is $\ell_1\oplus \widehat{X/Y}$ via the map $J=(\Id_{\ell_1}, T)\circ R\circ J_X$.
Finally, for $x\in X$ we have
\[
J(x)= (\widehat{\Fou}(J_X(x)), T(J_X(x)+S(\ell_1)))=(\Fou(x), J_{X/Y}(x+Y)).\qedhere
\]
\end{proof}

\section{Point separation property and density}\label{sect:Main}
\noindent
Our proof in Theorem~\ref{thm:main1} below of the existence of quasi-Banach spaces whose envelope map is one-to-one on a dense subspace and yet do not have the point-separation property relies on the following proposiiton.
\begin{proposition}\label{prop:main} Let
\[
0 \rightarrow Y \overset{S} \rightarrow X \overset{Q}\rightarrow Z \rightarrow 0
\]
be a short exact sequence of quasi-Banach spaces. Suppose that $X$ is separable and that $Z$ is infinite-dimensional. Then there exists a dense subspace $X_0\subseteq X$ such that $Q$ is one-to-one on $X_0$.
\end{proposition}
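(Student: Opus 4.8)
The plan is to construct a suitable dense sequence and let $X_0$ be its linear span. Observe first that, identifying $Y$ with $\Ker(Q)=S(Y)$ via the embedding $S$, the requirement that $Q$ be one-to-one on $X_0$ is exactly the condition $X_0\cap\Ker(Q)=\{0\}$. I would therefore aim to produce a sequence $(u_n)_{n=1}^\infty$ in $X$ with two properties: (a) $\{u_n\colon n\in\NN\}$ is dense in $X$, and (b) the images $(Q(u_n))_{n=1}^\infty$ are linearly independent in $Z$. Granting this, set $X_0=\spn(u_n\colon n\in\NN)$. Any nonzero $v\in X_0$ is a finite linear combination $v=\sum_i a_i u_i$ with not all $a_i$ zero, whence $Q(v)=\sum_i a_i Q(u_i)\neq 0$ by (b); thus $v\notin\Ker(Q)$ and $Q|_{X_0}$ is one-to-one, while (a) gives that $X_0$ is dense.

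To build $(u_n)$ I would proceed inductively, starting from a countable dense sequence $(x_n)_{n=1}^\infty$ of $X$, which exists because $X$ is separable. At stage $n$, suppose $u_1,\dots,u_{n-1}$ have already been chosen so that $Q(u_1),\dots,Q(u_{n-1})$ are linearly independent, and set $F_{n-1}=\spn(Q(u_1),\dots,Q(u_{n-1}))$, a subspace of $Z$ of dimension at most $n-1$. I want to select $u_n$ within distance $1/n$ of $x_n$ and with $Q(u_n)\notin F_{n-1}$. Since $Q$ is a surjective bounded linear map between quasi-Banach spaces, the Open Mapping theorem makes $Q$ an open map, so the image $Q(B)$ of the open ball $B=\{x\in X\colon \Vert x-x_n\Vert<1/n\}$ is an open neighbourhood of $Q(x_n)$ in $Z$. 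Because $Z$ is infinite-dimensional, the finite-dimensional subspace $F_{n-1}$ is proper, and no nonempty open set can be contained in a proper subspace (a translate of such a set would be an absorbing neighbourhood of $0$ lying in the subspace, forcing it to be all of $Z$); hence $Q(B)\not\subseteq F_{n-1}$, and I may pick $u_n\in B$ with $Q(u_n)\notin F_{n-1}$. By construction $Q(u_1),\dots,Q(u_n)$ remain linearly independent, completing the induction.

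It then remains to check density. By construction $\Vert u_n-x_n\Vert<1/n\to 0$. Since $Z$ is infinite-dimensional, so is $X$, and therefore $X$ has no isolated points; consequently every $x\in X$ is the limit of a subsequence $(x_{n_k})$ of the dense sequence with $n_k\to\infty$, and then $u_{n_k}\to x$ by the quasi-triangle inequality. Thus $\{u_n\colon n\in\NN\}$, and a fortiori $X_0$, is dense in $X$, as required.

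I expect the inductive perturbation step to be the crux of the argument: everything hinges on combining the openness of $Q$ (quotient map together with the Open Mapping theorem) with the infinite-dimensionality of $Z$ to escape the finite-dimensional subspace $F_{n-1}$ while staying close to $x_n$. The density verification and the linear-independence bookkeeping are then routine, modulo the minor point—easily dispatched via $X$ having no isolated points, or alternatively by listing the dense sequence with infinite repetitions—that small perturbations of a dense sequence again form a dense set.
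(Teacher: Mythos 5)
Your proof is correct, but it takes a genuinely different route from the paper's. The paper works inside the exact-sequence structure: it fixes a linearly independent sequence $(z_n)$ whose span $Z_0$ is dense in $Z$, takes preimages $x_n\in Q^{-1}(z_n)$, and then---this is the delicate part---approximates kernel elements without ever putting them in $X_0$ by perturbing $S(y_n)$, where $(y_n)$ enumerates a countable dense subset of $Y$ with infinite repetitions, by vectors $x'_n\to 0$ whose images $z'_n=Q(x'_n)$ are linearly independent and taken from an \emph{algebraic complement} of $Z_0$ (a Hamel-basis/choice argument); density of $X_0=\spn\left(\bigcup_n\{x_n,\,S(y_n)+x'_n\}\right)$ is then verified by an explicit three-term estimate with the quasi-triangle constant $\kappa$. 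You bypass the kernel entirely: you never use $Y$ or $S$, but instead perturb a dense sequence of $X$ itself, combining the openness of $Q$ with the fact that a finite-dimensional subspace of the infinite-dimensional space $Z$ has empty interior, so as to make the images $Q(u_n)$ linearly independent; density is then nearly automatic. Your route is shorter, avoids the algebraic-complement construction, and isolates what is really needed: that $Q$ is a bounded surjection of a separable quasi-Banach space onto an infinite-dimensional one. In fact, your own translate-of-an-absorbing-set observation lets you drop the Open Mapping theorem: $Q(B(0,1/n))$ is absorbing by surjectivity and homogeneity alone, and $Q(B)=Q(x_n)+Q(B(0,1/n))$, a translate of an absorbing set, cannot lie in a proper subspace. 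One small point to clean up: in a quasi-normed space the quasi-norm need not be continuous, so the ``open ball'' $B=\{x\colon \Vert x-x_n\Vert<1/n\}$ need not actually be open, and the assertion that $Q(B)$ is open is not quite justified as written; this is harmless---replace $B$ by its interior, which contains $x_n$, renorm via Aoki--Rolewicz, or use the absorbing-set argument just described---but it should be said.
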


\begin{proof} Without loss of generality we may assume that $S$ is an isometric embedding, i.e., $\Vert S(y)\Vert=\Vert y\Vert$ for all $y\in Y$, and that $Q$ is a $1$-quotient map, i.e., $\Vert z\Vert=\inf \{ \Vert x\Vert \colon Q(x)=z\}$ for all $z\in Z$. Let $\kappa\ge 1$ be such that $\Vert z+z'+z''\Vert \le \kappa (\Vert z\Vert +\Vert z'\Vert +\Vert z''\Vert)$ for all $z$, $z'$, $z''\in Z$.

Let $(z_n)_{n=1}^\infty$ be a linearly independent sequence in $Z$ such that $Z_0:=\spn(z_n\colon n\in\NN)$ is dense in $Z$. Pick an algebraic complement $Z_1$ of $Z_0$ and let $(z'_n)_{n=1}^\infty$ be a linearly independent sequence in $Z_1$ such that $\lim_n z'_n=0$. For each $n\in\NN$, let $x_n\in Q^{-1}(z_n)$ and $x'_n\in Q^{-1}( z'_n)$ be such that $\lim_n x'_n=0$.

Let $D\subseteq Y$ be countable and dense. Choose a sequence $y=(y_n)_{n=1}^\infty$ in $Y$ such that for every $k\in\NN$ and every $d\in D$ there is $n\ge k$ such that $y_n=d$. For instance, if $D=\{d_n \colon n\in\NN\}$, we could pick
\[
y=(d_1,d_1,d_2,\dots,d_1,\dots d_j,\dots, d_n,d_1,\dots,d_j,\dots,d_n,d_{n+1},\dots).
\]
Set
\[
X_0=\spn\left(\bigcup_{n=1}^\infty\{x_n, S(y_n)+x'_n\}\right).
\]
Let us see that $Q|_{X_0}$ is one-to-one. Suppose that $Q(x)=0$ for some $x\in X_0$. Then, if $(a_n)_{n\in A}$ and $(b_n)_{n\in B}$ are finite families of scalars such that
\[
x=\sum_{n\in A} a_n\, x_n + \sum_{n\in B} b_n\, (S(y_n)+\tilde x_n),
\]
applying $Q$ we obtain
\[
0=\sum_{n\in A} a_n\, y_n + \sum_{n\in B} b_n\, \tilde y_n.
\]
We infer that $a_n=b_k=0$ for every $n\in A$ and $k\in B$ and so $x=0$.

Let us next prove that that $X_0$ is a dense subspace of $X$. Let $x\in X$ and $\varepsilon>0$. Pick $(a_n)_{n=1}^m$ such that
\[
\left\Vert Q(x)-\sum_{n=1}^m a_n \, z_n\right\Vert < \frac{ \varepsilon}{3\kappa}.
\]
Since
\[
Q\left(x-\sum_{n=1}^m a_n \, x_n\right)= Q(x)-\sum_{n=1}^m a_n \, y_n,\] there is $y\in Y$ such that \[
\left\Vert -S(y)+ x-\sum_{n=1}^m a_n \, x_n\right\Vert <\frac{ \varepsilon}{3\kappa}.
\]
Let $y'\in D$ be such that $\Vert y- y'\Vert \le 3^{-1} \kappa^{-1} \varepsilon$. Now let $r\in\NN$ be such that $y_r=y'$ and $\Vert x'_r\Vert\le 3^{-1} \kappa^{-1} \varepsilon$. We have
\[
x':= S(y_r)+x'_r+ \sum_{n=1}^m a_n \, x_n\in X_0,
\]
and
\begin{align*}
\left\Vert x-x'\right\Vert
&\le\kappa\left( \Vert S(y)-S(y_r)\Vert+\Vert x'_r\Vert +\left\Vert -S(y)+ x-\sum_{n=1}^m a_n \, x_n\right\Vert\right)\\
&=\kappa\left( \Vert y-y_r\Vert+\Vert x'_r\Vert +\left\Vert -S(y)+ x-\sum_{n=1}^m a_n \, x_n\right\Vert\right)
< \varepsilon.\qedhere
\end{align*}
\end{proof}

\begin{theorem}\label{thm:main1}Let $X$ be a separable quasi-Banach space with $N=(X^*)^\perp\neq \{ 0\}$ and $X/N$ infinite-dimensional. Then:
\begin{enumerate}[(i)]
\item\label{item:1:main1} $J_X$ is not one-to-one, and
\item\label{item:2:main1} there exists a dense subspace $X_0\subseteq X$ such that $J_X$ is one-to-one on $X_0$.
\end{enumerate}
\end{theorem}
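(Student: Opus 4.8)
The plan splits along the two assertions. Part (i) will be immediate: by the very construction of the envelope map recalled in the introduction we have $\Ker(J_X)=N=(X^*)^\perp$, and since $N\neq\{0\}$ by hypothesis, Lemma~\ref{lem:2} shows at once that $J_X$ cannot be one-to-one. No work is needed here beyond reading off the setup.

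The substance lies in part (ii), and I would reduce it directly to Proposition~\ref{prop:main}. First I note that $N$ is closed, being the intersection $\bigcap_{x^*\in X^*}\Ker(x^*)$ of kernels of continuous functionals, so the inclusion $S\colon N\to X$ and the quotient map $Q\colon X\to X/N$ assemble into a short exact sequence
\[
0\to N\overset{S}\to X\overset{Q}\to X/N\to 0.
\]
By hypothesis $X$ is separable and $Z:=X/N$ is infinite-dimensional, which are precisely the standing assumptions of Proposition~\ref{prop:main}. Applying it supplies a dense subspace $X_0\subseteq X$ on which $Q$ is one-to-one.

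The final step is to transfer injectivity from $Q$ to $J_X$. Here I would invoke the factorization $J_X=I\circ Q$ coming from the construction of the Banach envelope, where $I\colon X/N\to\widehat X$ is the inclusion of the normed space $X/N$ into its completion and is in particular one-to-one. Consequently $J_X|_{X_0}=I\circ(Q|_{X_0})$ is a composite of two injective maps, hence injective, and $X_0$ is the dense subspace required.

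I do not anticipate a genuine obstacle, since all of the hard work has been front-loaded into Proposition~\ref{prop:main}, whose proof builds the dense subspace explicitly. The only point demanding care is the clean factorization $J_X=I\circ Q$ with $I$ injective: this is exactly the feature that lets one-to-one-ness of $Q$ on $X_0$ descend to $J_X$, and the infinite-dimensionality hypothesis on $X/N$ enters the argument solely through the applicability of Proposition~\ref{prop:main}.
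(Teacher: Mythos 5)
Your proposal is correct and follows essentially the same route as the paper: part (i) via Lemma~\ref{lem:2} (equivalently, $\Ker(J_X)=N\neq\{0\}$), and part (ii) by applying Proposition~\ref{prop:main} to the exact sequence $0\to N\to X\to X/N\to 0$ and then transferring injectivity through the factorization $J_X=I\circ Q$ with $I\colon X/N\to\widehat X$ the (injective) inclusion into the completion. Your added remarks that $N$ is closed and that the infinite-dimensionality of $X/N$ enters only through Proposition~\ref{prop:main} are accurate and merely make explicit what the paper leaves implicit.
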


\begin{proof} \eqref{item:1:main1} is a consequence of Lemma~\ref{lem:2}. In order to prove \eqref{item:2:main1} we use Proposition~\ref{prop:main} to pick a dense subspace $X_0\subseteq X$ on which the canonical map $Q\colon X\to X/N$ is one-to-one. Then $I\circ Q$ is one-to-one on $X_0$, where $I\colon X/N\to\widehat{X}$ is the inclusion map. Since $J_X=I\circ Q$, we are done.
\end{proof}

Given a quasi-Banach space $X$, the closed linear span of the union of all subspaces of $X$ with trivial dual has trivial dual. Hence $X$ has a maximum subspace $V$ with trivial dual, and $V$ is closed. Following \cite{Kalton1981} we call $V$ the \emph{core} of $X$, and we put $V=\core(X)$. Clearly we have
\begin{equation}\label{eq:CorevsN}
\core(X)\subseteq (X^*)^\perp.
\end{equation}
In particular, if $X$ has the point-separation property its core is null.

\begin{example}\label{ex:main}
Let $Y$ be a nonzero separable quasi-Banach space with trivial dual and $Z$ be an infinite-dimensional space with the point-separation property. Suppose that $X$ is a twisted sum of $Y$ and $Z$, i.e., $X$ contains a closed subspace $V$ isomorphic to $Y$ with $X/V$ isomorphic to $Z$. Then, $V\subseteq \core(X)$ and, by Lemma~\ref{lem:XVPointSeparation}, $(X^*)^\perp\subseteq V$. Taking into account the inclusion \eqref{eq:CorevsN} we obtain
\[
V=\core(X)= (X^*)^\perp.
\]
Therefore, Theorem~\ref{thm:main1} applies to $X$. Note that $X$ is a twisted sum of $Y$ and $Z$ if and only if $X$ is isomorphic to $Y\oplus_F Z$ for some quasi-linear map $F\colon Z\to Y$ (see \cite{KaltonPeck1979}*{Theorem 2.4}). In particular Theorem~\ref{thm:main1} applies to the direct sum $Y\oplus Z$. To give a definite example, let us mention that although the envelope map of $L_p\oplus \ell_p$, $0<p<1$, is not one-to-one, it is one-to-one on a dense subspace.
\end{example}

If $X$ is as in Example~\ref{ex:main}, then its core is nonzero. We close by exhibiting that we can also apply Theorem~\ref{thm:main1} to quasi-Banach spaces with null core. Note that, surprisingly, this yields a quasi-Banach space $X$ with $\core(X)\subsetneq (X^*)^\perp$.

\begin{example}
In \cite{Ribe} (see also \cite{KPR1984}*{\S5.4}) Ribe constructed a quasi-Banach space $R$ with a one-dimensional subspace $L\subseteq R$ such that $R/L$ is isomorphic to $\ell_1$, i.e., $R$ is a twisted sum of $\FF$ and $\ell_1$, and $L$ is uncomplemented in $R$. In particular $R/L$ has the point-separation property. By Lemma~\ref{lem:XVPointSeparation}, $(R^{\, *})^\perp\subseteq L$ and, by Lemma~\ref{lem:FXUncomplemented}, $L\subseteq (R^{\,*})^\perp$. Therefore, $(R^{\, *})^\perp=L$. As far as the core is concerned, since $L^*\neq\{0\}$, applying the inclusion \eqref{eq:CorevsN} yields $\core(R)\subsetneq L$. Consequently, $\core(R)=\{0\}$.

By Lemma~\ref{lem:zero}, $J_X$ is not one-to-one. Since $R/ (R^{\,*})^\perp$ is isomorphic to $\ell_1$, it is infinite-dimensional. Therefore, Theorem~\ref{thm:main1} applies to $R$. We conclude by identifying the Banach envelope of $R$. The properties of the Ribe space yield a quotient map $Q\colon R\to \ell_1$ with $\Ker(Q)=L$. By Corollary~\ref{cor:2}, $\ell_1$ is a representation of the Banach envelope of $R$ via $Q$.
\end{example}

\begin{bibsection}
\begin{biblist}

\bib{AlbiacAnsorena2015}{article}{
author={Albiac, F.},
author={Ansorena, J.~L.},
title={On the mutually non isomorphic $\ell_p(\ell_q)$ spaces, II},
journal={Math. Nachr.},
volume={288},
date={2015},
number={1},
pages={5--9},
}

\bib{AlbiacAnsorena2017+}{article}{
author={Albiac, F.},
author={Ansorena, J.~L.},
title={Isomorphic classification of mixed sequence spaces and of Besov spaces over $[0,1]^d$},
journal={Math. Nachr.},
volume={290},
date={2017},
number={8-9},
pages={1177--1186},
}

\bib{AABW2019}{article}{
author={Albiac, F.},
author={Ansorena, J.~L.},
author={Bern\'a, P.},
author={Wojtaszczyk, P.},
title={Greedy approximation for biorthogonal systems in quasi-Banach spaces},
journal={arXiv:1903.11651 [math.FA]},
}

\bib{AACD2019}{article}{
author={Albiac, F.},
author={Ansorena, J.~L.},
author={C\'uth, M.},
author={Doucha, M.},
title={Lipschitz free $p$-spaces for $0<p<1$},
journal={arXiv:1811.01265 [math.FA], To appear in Israel J. Math.},
date={2019},
}

\bib{Arino1987}{article}{
author={Ari\~{n}o, M.~A.},
title={Convexity and Banach envelope of the weak-$L_p$ spaces},
journal={Proc. Amer. Math. Soc.},
volume={100},
date={1987},
number={4},
pages={661--665},
}

\bib{AEP1988}{article}{
author={Ari\~{n}o, M.~A.},
author={Eldeeb, R.},
author={Peck, N.~T.},
title={The Lorentz sequence spaces $d(w,p)$ where $w$ is increasing},
journal={Math. Ann.},
volume={282},
date={1988},
number={2},
pages={259--266},
}

\bib{Cwikel1973}{article}{
author={Cwikel, M.},
title={On the conjugates of some function spaces},
journal={Studia Math.},
volume={45},
date={1973},
pages={49--55. (errata insert)},
}

\bib{CwiSag1971}{article}{
author={Cwikel, M.},
author={Sagher, Y.},
title={$L(p,\infty )$},
journal={Indiana Univ. Math. J.},
volume={21},
date={1971/72},
pages={781--786},
}

\bib{CF1980}{article}{
author={Cwikel, M.},
author={Fefferman, C.},
title={The canonical seminorm on weak $L^1$},
journal={Studia Math.},
volume={78},
date={1984},
number={3},
pages={275--278},
}

\bib{CRS2007}{article}{
author={Carro, M.~J.},
author={Raposo, J.~A.},
author={Soria, J.},
title={Recent developments in the theory of Lorentz spaces and weighted inequalities},
journal={Mem. Amer. Math. Soc.},
volume={187},
date={2007},
number={877},
pages={xii+128},
}

\bib{CW2017}{article}{
author={Czerwi\'{n}ska, M.~M.},
author={Kami\'{n}ska, A.},
title={Banach envelopes in symmetric spaces of measurable operators},
journal={Positivity},
volume={21},
date={2017},
number={1},
pages={473--492},
}

\bib{Day1940}{article}{
author={Day, M.~M.},
title={The spaces $L^p$ with $0<p<1$},
journal={Bull. Amer. Math. Soc.},
volume={46},
date={1940},
pages={816--823},
}

\bib{Drewnowski1988}{article}{
author={Drewnowski, L.},
title={Compact operators on Musielak-Orlicz spaces},
journal={Comment. Math. Prace Mat.},
volume={27},
date={1988},
number={2},
pages={225--232},
}

\bib{DN1982}{article}{
author={Drewnowski, L.},
author={Nawrocki, M.},
title={On the Mackey topology of Orlicz sequence spaces},
journal={Arch. Math. (Basel)},
volume={39},
date={1982},
number={1},
pages={59--68},
}

\bib{DRS1969}{article}{
author={Duren, P.~L.},
author={Romberg, B.~W.},
author={Shields, A.~L.},
title={Linear functionals on $H^{p}$ spaces with $0<p<1$},
journal={J. Reine Angew. Math.},
volume={238},
date={1969},
pages={32--60},
}

\bib{Grothendieck1955}{article}{
author={Grothendieck, A.},
title={Produits tensoriels topologiques et espaces nucl\'{e}aires},
language={French},
journal={Mem. Amer. Math. Soc.},
volume={16},
date={1955},
pages={Chapter 1: 196 pp.; Chapter 2: 140},
}

\bib{Hunt1966}{article}{
author={Hunt, R.~A.},
title={On $L(p,\,q)$ spaces},
journal={Enseign. Math. (2)},
volume={12},
date={1966},
pages={249--276},
}

\bib{Kalton1977a}{article}{
author={Kalton, N.~J.},
title={Compact and strictly singular operators on Orlicz spaces},
journal={Israel J. Math.},
volume={26},
date={1977},
number={2},
pages={126--136},
}

\bib{Kalton1977b}{article}{
author={Kalton, N.~J.},
title={Orlicz sequence spaces without local convexity},
journal={Math. Proc. Cambridge Philos. Soc.},
volume={81},
date={1977},
number={2},
pages={253--277},
}

\bib{Kalton1978}{article}{
author={Kalton, N.~J.},
title={The three space problem for locally bounded $F$-spaces},
journal={Compositio Math.},
volume={37},
date={1978},
number={3},
pages={243--276},
}

\bib{Kalton1981}{article}{
author={Kalton, N.~J.},
title={Curves with zero derivative in $F$-spaces},
journal={Glasgow Math. J.},
volume={22},
date={1981},
number={1},
pages={19--29},
}

\bib{Kalton1986}{article}{
author={Kalton, N.~J.},
title={Banach envelopes of nonlocally convex spaces},
journal={Canad. J. Math.},
volume={38},
date={1986},
number={1},
pages={65--86},
}

\bib{KaltonPeck1979}{article}{
author={Kalton, N.~J.},
author={Peck, N.~T.},
title={Twisted sums of sequence spaces and the three space problem},
journal={Trans. Amer. Math. Soc.},
volume={255},
date={1979},
pages={1--30},
}

\bib{KPR1984}{book}{
author={Kalton, N.~J.},
author={Peck, N.~T.},
author={Roberts, J.~W.},
title={An $F$-space sampler},
series={London Mathematical Society Lecture Note Series},
volume={89},
publisher={Cambridge University Press, Cambridge},
date={1984},
pages={xii+240},
}

\bib{KaltonSukochev2008}{article}{
author={Kalton, N.~J.},
author={Sukochev, F.~A.},
title={Symmetric norms and spaces of operators},
journal={J. Reine Angew. Math.},
volume={621},
date={2008},
pages={81--121},
}

\bib{KamMas2007}{article}{
author={Kami\'{n}ska, A.},
author={Masty\l o, M.},
title={Abstract duality Sawyer formula and its applications},
journal={Monatsh. Math.},
volume={151},
date={2007},
number={3},
pages={223--245},
}

\bib{KamLin2014}{article}{
author={Kami\'{n}ska, A.},
author={Lin, P.-K.},
title={Banach envelopes of $p$-Banach lattices, $0<p<1$, and Ces\`aro spaces},
journal={Funct. Approx. Comment. Math.},
volume={50},
date={2014},
number={2},
pages={297--306},
}

\bib{Markus1964}{article}{
author={Markus, A.~S.},
title={Eigenvalues and singular values of the sum and product of linear operators},
language={Russian},
journal={Uspehi Mat. Nauk},
volume={19},
date={1964},
number={4 (118)},
pages={93--123},
}

\bib{Mitjagin1964}{article}{
author={Mitjagin, B.~S.},
title={Normed ideals of intermediate type},
language={Russian},
journal={Izv. Akad. Nauk SSSR Ser. Mat.},
volume={28},
date={1964},
pages={819--832},
}

\bib{Peetre1974}{article}{
author={Peetre, J.},
title={Remark on the dual of an interpolation space},
journal={Math. Scand.},
volume={34},
date={1974},
pages={124--128},
}

\bib{Pietsch2009}{article}{
author={Pietsch, A.},
title={About the Banach envelope of $l_{1,\infty}$},
journal={Rev. Mat. Complut.},
volume={22},
date={2009},
number={1},
pages={209--226},
}

\bib{Ribe}{article}{
author={Ribe, M.},
title={Examples for the nonlocally convex three space problem},
journal={Proc. Amer. Math. Soc.},
volume={73},
date={1979},
number={3},
pages={351--355},
}

\bib{Shapiro1976}{article}{
author={Shapiro, J.~H.},
title={Mackey topologies, reproducing kernels, and diagonal maps on the Hardy and Bergman spaces},
journal={Duke Math. J.},
volume={43},
date={1976},
number={1},
pages={187--202},
}

\bib{Shapiro1977}{article}{
author={Shapiro, J.~H.},
title={Remarks on $F$-spaces of analytic functions},
conference={
title={Banach spaces of analytic functions},
address={Proc. Pelczynski Conf., Kent State Univ., Kent, Ohio},
date={1976},
},
book={
publisher={Springer, Berlin},
},
date={1977},
pages={107--124. Lecture Notes in Math., Vol. 604},
}

\bib{Vogt1967}{article}{
author={Vogt, D.},
title={Integrationstheorie in $p$-normierten R\"{a}umen},
language={German},
journal={Math. Ann.},
volume={173},
date={1967},
pages={219--232},
}

\bib{Woj1984}{article}{
author={Wojtaszczyk, P.},
title={$H_{p}$-spaces, $p\leq 1$, and spline systems},
journal={Studia Math.},
volume={77},
date={1984},
number={3},
pages={289--320},
}

\bib{Woj1997}{article}{
author={Wojtaszczyk, P.},
title={Uniqueness of unconditional bases in quasi-Banach spaces with applications to Hardy spaces. II},
journal={Israel J. Math.},
volume={97},
date={1997},
pages={253--280},
}

\end{biblist}
\end{bibsection}

\end{document}